 \font\smallit=cmti10
\renewcommand{\@seccntformat}[1]{\csname the#1\endcsname. }
 \newtheorem{theorem}{Theorem}[section]
 \newtheorem{lemma}[theorem]{Lemma}
 \newtheorem{proposition}[theorem]{Proposition}
 \newtheorem{example}[theorem]{Example}
\begin{document}
\begin{center}
 {\bf On $p$-adic Transference Theorem}
 \vskip 30pt

 {\bf Chi Zhang}\\
 {\smallit Key Laboratory of Mathematics Mechanization, NCMIS, Academy of Mathematics and Systems Science, Chinese Academy of Sciences, Beijing 100190, People's Republic of China}\\
 {and}\\
 {\smallit School of Mathematical Sciences, University of Chinese Academy of Sciences, Beijing 100049, People's Republic of China}\\

 \vskip 10pt

 {\tt zhangchi171@mails.ucas.ac.cn}\\

 \end{center}
\vskip 30pt

\centerline{\bf Abstract} \par
Dual lattice is an important concept of Euclidean lattices. In 2024, Deng gave the definition to the concept of the dual lattice of a $p$-adic lattice from the duality theory of locally compact abelian groups. He also proved some important properties of the dual lattice of $p$-adic lattices, which can be viewed as $p$-adic analogues of the famous Minkowski's first, second theorems and transference theorems for Euclidean lattices. However, he only proved the lower bounds of the transference theorems and Minkowski's second theorem for $p$-adic lattices. The upper bounds are left as an open question. In this paper, we prove the upper bounds of the transference theorems and Minkowski's second theorem for $p$-adic lattices. We then prove that the dual basis of an orthogonal basis is also an orthogonal basis with respect to the maximum norm.

\vskip 10pt
2010 Mathematics Subject Classification: Primary 11F85.\par
Key words and phrases:  $p$-adic lattice, Local field, CVP, LVP, Transference theorem, Minkowski's theorem.

\noindent

\pagestyle{myheadings}

 \thispagestyle{empty}
 \baselineskip=12.875pt
 \vskip 20pt

\section{Transference Theorem}

In \cite{ref-2}, Deng gave the definition to the concept of the dual lattice of a $p$-adic lattice and then proved some important properties of the dual lattice of $p$-adic lattices, including the lower bound of the transference theorems for $p$-adic lattices. He left the upper bound as an open question.\par
The lower bound exists even if the $p$-adic lattice is not of full rank. However, the upper bound does not exist if the $p$-adic lattice is not of full rank. Consider the following example.\par
Let $k=\mathbb{Q}_p$ or $k=\mathbb{F}_p((T))$. Let $M$ be the maximum norm on $k^m$. Let the basis of $\Lambda$ be
$$B=\begin{pmatrix}
1 \\
\vdots \\
1 
\end{pmatrix}.$$
Then the dual basis
$$D=\frac{1}{m}\begin{pmatrix}
1 \\
\vdots \\
1 
\end{pmatrix}.$$
Hence
$$\tilde{\lambda}_{1}(\Lambda)\cdot\tilde{\lambda}_{1}(\Lambda^*)=\left|\frac{1}{m}\right|_p.$$
We can take $m=p^l$ for any large integer $l$.\par
Therefore, in the following subsections, we consider the $p$-adic lattice which is of full rank.

\subsection{Transference Theorem for the Maximum Norm}

Let $k=\mathbb{Q}_p$ or $k=\mathbb{F}_p((T))$. Let $M$ be the maximum norm on $k^m$, i.e., for $v=(v_1,\dots,v_m)\in k^m$, 
$$M(v)=\max_{1\le i\le m}{\left|v\right|_p}.$$
We consider the upper bound for the transference theorem respect to norm $M$ first. We need the following orthogonalization algorithm.\par\vskip 10pt

{\bf Algorithm} (orthogonalization with orthogonal bases).\par
{\bf Input:} an $N$-orthogonal basis $e_1,\dots,e_n$ of $k^n$, a lattice $\mathcal{L}=\mathcal{L}(\alpha_1,\dots,\alpha_m)$ in $k^n$.\par
{\bf Output:} an $N$-orthogonal basis of $\mathcal{L}$.
\begin{enumerate}
\item for $i=1$ to $m$ do:
\item\quad rearrange $\alpha_i,\dots,\alpha_m$ such that  $N(\alpha_i)=\max_{i\le k\le m}{N(\alpha_k)}$,
\item\quad rearrange $e_i,\dots,e_n$ such that  $N(a_{ii}e_i)=\max_{i\le j\le m}{N(a_{ji}e_j)}$,
\item\quad for $l=i+1$ to $m$ do:
\item\quad\quad $\alpha_l\leftarrow\alpha_l-\frac{a_{il}}{a_{ii}}\alpha_i.$
\end{enumerate}\par
Return $(\alpha_1,\dots,\alpha_m)$.\par\vskip 10pt

\begin{theorem}[\cite{ref-1}]\label{th-1.1}
The algorithm above outputs an $N$-orthogonal basis of $\mathcal{L}$.
\end{theorem}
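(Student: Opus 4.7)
The plan is to maintain several loop invariants through the outer iterations and then deduce the $N$-orthogonality of the output. Writing $\alpha_l=\sum_j a_{j,l}e_j$ for the current coefficients in the $e$-basis, I would prove by induction on $i$ that after the $i$-th outer iteration the following hold: (a) $\mathcal{L}(\alpha_1,\dots,\alpha_m)$ equals the input lattice $\mathcal{L}$; (b) for every $l>i$, $a_{j,l}=0$ whenever $j\le i$, so $\alpha_l\in\mathrm{span}(e_{i+1},\dots,e_n)$; and (c) for every $j\le i$ and every $k\ge j$, $|a_{j,j}|_p\,N(e_j)\ge|a_{k,j}|_p\,N(e_k)$, which in particular gives $N(\alpha_j)=|a_{j,j}|_p\,N(e_j)$.

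Invariant (b) is immediate from the elimination step. Invariant (c) follows directly from the pivot choice in step 3. Invariant (a) is the first delicate point: the swap is unimodular, but the shear $\alpha_l\mapsto\alpha_l-(a_{i,l}/a_{i,i})\alpha_i$ preserves the lattice only when $|a_{i,l}/a_{i,i}|_p\le 1$. This is precisely where step 2 is needed. Step 2 ensures $N(\alpha_l)\le N(\alpha_i)$; step 3, combined with the $N$-orthogonality of the $e_j$'s, gives $N(\alpha_i)=|a_{i,i}|_p\,N(e_i)$; and $N$-orthogonality of the $e_j$'s also yields $N(\alpha_l)\ge|a_{i,l}|_p\,N(e_i)$. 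Combining these three inequalities gives $|a_{i,l}|_p\le|a_{i,i}|_p$, as required.

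From these invariants at loop termination I would deduce the $N$-orthogonality of the output by a second induction, this time on $m$. Given a combination $\sum_{j=1}^m c_j\alpha_j$, set $v=\sum_{j\ge 2}c_j\alpha_j$; by invariant (b) applied to the sub-collection $\alpha_2,\dots,\alpha_m$ (which itself satisfies the analogous invariants), $v\in\mathrm{span}(e_2,\dots,e_n)$, and the inductive hypothesis gives $N(v)=\max_{j\ge 2}N(c_j\alpha_j)$. The $e_1$-coordinate of the full sum is $c_1a_{1,1}$, contributing $|c_1a_{1,1}|_p\,N(e_1)=N(c_1\alpha_1)$ to the norm via the $N$-orthogonality of $e_1,\dots,e_n$. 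The ultrametric inequality then gives $N\bigl(\sum_j c_j\alpha_j\bigr)\le\max(N(c_1\alpha_1),N(v))$, and a case analysis on whether $N(c_1\alpha_1)$ is less than, greater than, or equal to $N(v)$ completes the argument. I expect the equality case to be the main obstacle, since ultrametric bounds can strictly drop when two summands have equal norm; the rescue is the lower bound $N\bigl(\sum_j c_j\alpha_j\bigr)\ge N(c_1\alpha_1)$ supplied by the isolated $e_1$-component, which forces $N\bigl(\sum_j c_j\alpha_j\bigr)\ge N(v)=\max_j N(c_j\alpha_j)$ and thus closes the loop.
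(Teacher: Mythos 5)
The paper does not prove Theorem \ref{th-1.1} at all --- it is imported verbatim from \cite{ref-1} as a black box --- so there is no in-paper proof to compare against. Judged on its own, your argument is correct and complete. The three invariants are the right ones, and you isolate the two genuinely delicate points accurately: first, that the shear $\alpha_l\leftarrow\alpha_l-\frac{a_{il}}{a_{ii}}\alpha_i$ preserves the lattice, which you justify by chaining $|a_{il}|_p\,N(e_i)\le N(\alpha_l)\le N(\alpha_i)=|a_{ii}|_p\,N(e_i)$ using step 2 (the pivot $\alpha_i$ has maximal norm), step 3 together with invariant (b) (so that $N(\alpha_i)=|a_{ii}|_p\,N(e_i)$), and the $N$-orthogonality of the $e_j$'s; and second, the equality case in the ultrametric bound at the end, which you correctly rescue with the lower bound $N\bigl(\sum_j c_j\alpha_j\bigr)\ge|c_1a_{11}|_p\,N(e_1)=N(c_1\alpha_1)$ coming from the isolated $e_1$-coordinate. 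The induction on $m$ is legitimate because after the first outer iteration the algorithm restricted to $\alpha_2,\dots,\alpha_m$ is exactly the same algorithm run with the $N$-orthogonal basis $e_2,\dots,e_n$ of the smaller span, thanks to invariant (b). The only points worth making explicit in a final write-up are that $a_{ii}\neq 0$ at each pivot (which follows from linear independence of the $\alpha$'s plus invariant (b)) and that the isoceles property $N(v+w)=N(v)$ when $N(v)>N(w)$ holds for any ultrametric norm, not just absolute values; both are routine.
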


Notice that the above theorem is proved only for $k=\mathbb{Q}_p$, but it is easy to see that the theorem also holds for $k=\mathbb{F}_p((T))$.\par
Now we can prove our main theorem of this subsection.

\begin{theorem}\label{th-1.2}
Let $k=\mathbb{Q}_p$ or $k=\mathbb{F}_p((T))$. Suppose $\Lambda$ is any $p$-adic lattice of full rank in $k^n$, $\Lambda^*$ is its dual. Then we have
$$\tilde{\lambda}_i(\Lambda)\cdot\tilde{\lambda}_{n+1-i}(\Lambda^*)=1$$
for $1\le i\le n$, where $\tilde{\lambda}_i(\Lambda)$ is the $i$-th successive maxima of the lattice $\Lambda$ and $\tilde{\lambda}_{n+1-i}(\Lambda)$ is the $(n+1-i)$-th successive maxima of the lattice $\Lambda^*$ with respect to the norm $M$.
\end{theorem}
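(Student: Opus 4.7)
The strategy is to use Theorem~\ref{th-1.1} to extract an $M$-orthogonal basis of $\Lambda$, read off $\tilde{\lambda}_i(\Lambda)$ from its norms, then show that the corresponding dual basis of $k^n$ sits inside $\Lambda^*$ with reciprocal norms and is itself $M$-orthogonal, so that $\tilde{\lambda}_{n+1-i}(\Lambda^*)$ can be read off the same way.

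Concretely, I first apply the orthogonalization algorithm to $\Lambda$ with the standard basis of $k^n$ as the auxiliary $N$-orthogonal basis, obtaining an $M$-orthogonal basis $v_1,\dots,v_n$ of $\Lambda$, which I reorder so that $M(v_1)\ge M(v_2)\ge\cdots\ge M(v_n)$. Because $M(\sum_j c_j v_j)=\max_j|c_j|_p M(v_j)$ by $M$-orthogonality, it is routine from Deng's definition that $\tilde{\lambda}_i(\Lambda)=M(v_i)$. Let $w_1,\dots,w_n\in k^n$ be characterized by $w_j\cdot v_k=\delta_{jk}$; each $w_j$ lies in $\Lambda^*$ because it pairs into $\mathcal{O}_k$ with every $\mathcal{O}_k$-combination of the $v_k$. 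The key computation is that $M(w_j)=1/M(v_j)$. One direction comes from the ultrametric Cauchy--Schwarz inequality $1=|w_j\cdot v_j|_p\le M(w_j)M(v_j)$. For the reverse, expand $e_k=\sum_j w_{j,k}\,v_j$, where $w_{j,k}$ denotes the $k$-th coordinate of $w_j$, and apply $M$-orthogonality of $\{v_j\}$ to get $1=M(e_k)=\max_j|w_{j,k}|_p M(v_j)$; this forces $|w_{j,k}|_p\le 1/M(v_j)$ for every $k$, whence $M(w_j)\le 1/M(v_j)$.

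Once $M(w_j)=1/M(v_j)$ is known, the family $\{w_j\}$ is also $M$-orthogonal: for $\sum_j d_j w_j$ the upper bound $M(\sum_j d_j w_j)\le\max_j|d_j|_p M(w_j)$ is automatic, while the matching lower bound follows from recovering $d_l=(\sum_j d_j w_j)\cdot v_l$ and invoking ultrametric Cauchy--Schwarz. So $\{w_j\}$ is an $M$-orthogonal basis of $\Lambda^*$; reading off its successive maxima in decreasing order of norm gives $\tilde{\lambda}_{n+1-i}(\Lambda^*)=M(w_i)=1/M(v_i)$, and the identity $\tilde{\lambda}_i(\Lambda)\cdot\tilde{\lambda}_{n+1-i}(\Lambda^*)=1$ falls out. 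The step I expect to be most delicate is the identification $\tilde{\lambda}_i(\Lambda)=M(v_i)$ (and the analogue for $\Lambda^*$): showing that the multiset $\{M(v_j)\}$ of an $M$-orthogonal basis is a genuine lattice invariant agreeing with Deng's successive maxima is bookkeeping that must be checked against his definitions rather than simply asserted; once it is in place, the rest of the argument is a single ultrametric inner-product computation.
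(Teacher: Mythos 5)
Your argument is correct, but it takes a genuinely different route from the paper. The paper first runs the orthogonalization algorithm to put the basis in lower-triangular form with $\left|b_{ii}\right|_p=\max\{\left|b_{jk}\right|_p: i\le k\le j\le n\}$, then computes the dual basis explicitly through the adjoint matrix, and verifies entrywise norm inequalities ($\left|d_{ij}\right|_p\le\left|d_{jj}\right|_p$ and the monotonicity of the diagonal) to conclude that the dual basis again meets the algorithm's orthogonality criterion. You instead work coordinate-free with an arbitrary $M$-orthogonal basis $v_1,\dots,v_n$ and exploit the pairing directly: the inequality $\left|x\cdot y\right|_p\le M(x)M(y)$ gives $M(w_j)\ge M(v_j)^{-1}$, the expansion $e_k=\sum_j w_{j,k}v_j$ together with $M$-orthogonality gives the reverse bound, and recovering coefficients via $d_l=(\sum_j d_jw_j)\cdot v_l$ gives the $M$-orthogonality of the dual family. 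This is cleaner in that it avoids the cofactor computation entirely, and it buys you more: your argument proves, for \emph{any} $M$-orthogonal basis, that the dual basis is $M$-orthogonal with reciprocal norms --- which is exactly Theorem~\ref{th-2.3}, a result the paper proves separately by a considerably longer induction over elementary column operations using Theorems~\ref{th-2.1} and~\ref{th-2.2}. Two small points to tighten: you should record that $w_1,\dots,w_n$ not only lie in $\Lambda^*$ but generate it (immediate, since any $y\in\Lambda^*$ has $y=\sum_l(y\cdot v_l)w_l$ with $y\cdot v_l\in\mathcal{O}_k$; the paper likewise uses $\Lambda^*=\mathcal{L}(D)$ without comment), and the identification $\tilde{\lambda}_i(\Lambda)=M(v_i)$ for a sorted $M$-orthogonal basis, which you rightly flag, is also taken for granted in the paper (it follows from Proposition~\ref{pr-1.5} together with the orthogonality lower bound). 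Note also that your key inequality $\left|x\cdot y\right|_p\le M(x)M(y)$ is special to the maximum norm, which is consistent with the theorem failing to be an equality for general norms.
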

\begin{proof}
Suppose $\Lambda=\mathcal{L}(B)$ and $D=(d_1,\dots,d_n)$ is the dual basis of $B=(b_1,\dots,b_n)$. We have $\Lambda^*=\mathcal{L}(D)$. Let the basis of $k^n$ be $e_1,\dots,e_n$. Then $e_1,\dots,e_n$ is an $M$-orthogonal basis of $k^n$ because for any vector $v=v_1e_1+\cdots+v_ne_n\in k^n$, the norm
$$M(v)=\max_{1\le i\le n}{\left|v_i\right|_p}=\max_{1\le i\le n}{\left|v_ie_i\right|_p}.$$
Therefore, we can run the algorithm ``orthogonalization with orthogonal bases'' to obtain the following $M$-orthogonal basis of $\Lambda$.
$$B=\begin{pmatrix}
b_{11} & 0 & \cdots & 0 \\
b_{21} & b_{22} & \cdots & 0 \\
\vdots & \vdots & \ddots & \vdots \\
b_{n1} & b_{n2} & \cdots & b_{nn}
\end{pmatrix}$$
where $\left|b_{ii}\right|_p=\max{\left\{\left|b_{jk}\right|_p:i\le k \le j\le n\right\}}$. Then we have $\tilde{\lambda}_i(\Lambda)=M(b_i)=\left|b_{ii}\right|_p$ for $1\le i\le n$.\par
Let
$$B^*=\begin{pmatrix}
B_{11} & 0 & \cdots & 0 \\
B_{12} & B_{22} & \cdots & 0 \\
\vdots & \vdots & \ddots & \vdots \\
B_{1n} & B_{2n} & \cdots & B_{nn}
\end{pmatrix}$$
be the adjoint matrix of $B$. We have
$$B_{ii}=\frac{\det{B}}{b_{ii}}$$
for $1\le i\le n$. and
$$D=(\frac{B^*}{\det{B}})^\mathsf{T}=\begin{pmatrix}
b_{11}^{-1} & * & \cdots & * \\
0 & b_{22}^{-1} & \cdots & * \\
\vdots & \vdots & \ddots & \vdots \\
0 & 0 & \cdots & b_{nn}^{-1}
\end{pmatrix}.$$
Write  $D=(d_{ij})$. Since
$$\left|B_{ji}\right|_p\le\left|\frac{\det{B}}{b_{ii}}\right|_p=\left|B_{ii}\right|_p,$$
we have $\left|d_{ij}\right|_p\le\left|d_{jj}\right|_p$ for all $1\le i\le j\le n$. Also, we have
$$\left|d_{nn}\right|_p=\left|b_{nn}^{-1}\right|_p\ge\left|d_{n-1,n-1}\right|_p=\left|b_{n-1,n-1}^{-1}\right|_p\ge\cdots\ge\left|d_{11}\right|_p=\left|b_{11}^{-1}\right|_p.$$
Hence $d_1,\dots,d_n$ is an orthogonal basis of $\Lambda^*$ according to the algorithm. Therefore, we have
$$\tilde{\lambda}_{n+1-i}(\Lambda^*)=M(d_i)=\left|b_{ii}^{-1}\right|_p$$
and
$$\tilde{\lambda}_i(\Lambda)\cdot\tilde{\lambda}_{n+1-i}(\Lambda^*)=M(b_i)\cdot M(d_i)=1$$
for $1\le i\le n$.
\end{proof}

\begin{example}
Let $k=\mathbb{Q}_2$ and $n=3$. Let the basis of $\Lambda$ be
$$B=\begin{pmatrix}
1 & 2 & 0 \\
0 & 2 & 2 \\
2 & 0 & 4
\end{pmatrix}.$$
We can run the algorithm ``orthogonalization with orthogonal bases'' to obtain an $M$-orthogonal basis of $\Lambda$,
$$B^{\prime}=\begin{pmatrix}
1 & 0 & 0 \\
0 & 2 & 0 \\
2 & -4 & 8
\end{pmatrix}.$$
Hence $\tilde{\lambda}_{1}(\Lambda)=1$, $\tilde{\lambda}_{2}(\Lambda)=\frac{1}{2}$ and $\tilde{\lambda}_{3}(\Lambda)=\frac{1}{8}$ with respect to the norm $M$. The dual basis
$$D=\frac{1}{8}\begin{pmatrix}
4 & -4 & 2 \\
2 & 2 & -1 \\
-2 & 2 & 1
\end{pmatrix}.$$
We can run the algorithm ``orthogonalization with orthogonal bases'' to obtain an $M$-orthogonal basis of $\Lambda^*$,
$$D^{\prime}=\frac{1}{8}\begin{pmatrix}
8 & -8 & 2 \\
0 & 4 & -1 \\
0 & 0 & 1
\end{pmatrix}.$$
Hence $\tilde{\lambda}_{1}(\Lambda^*)=8$, $\tilde{\lambda}_{2}(\Lambda^*)=2$ and $\tilde{\lambda}_{3}(\Lambda^*)=1$ with respect to the norm $M$. We have
$$\tilde{\lambda}_{1}(\Lambda)\cdot\tilde{\lambda}_{3}(\Lambda^*)=\tilde{\lambda}_{2}(\Lambda)\cdot\tilde{\lambda}_{2}(\Lambda^*)=\tilde{\lambda}_{3}(\Lambda)\cdot\tilde{\lambda}_{1}(\Lambda^*)=1.$$
\end{example}

\subsection{Transference Theorem for General Norms}

For a general norm $N$ on $k^m$, there is also an upper bound for the transference theorem. We need the following lemma and proposition.

\begin{lemma}[\cite{ref-3}]\label{le-1.4}
Let $N$ be a norm on $k^m$. Then the two norms $M$ and $N$ are equivalent, i.e., there are two positive constants $c_1$, $c_2$ such that
$$c_1N(v)\le M(v)\le c_2N(v)$$
for any $v\in k^n$.
\end{lemma}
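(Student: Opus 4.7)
The plan is to split the equivalence into two one-sided inequalities and treat them by rather different techniques. The half $N(v)\le C_1 M(v)$ is a direct consequence of the non-archimedean triangle inequality applied to the standard-basis expansion, while the reverse half $M(v)\le C_2 N(v)$ requires a compactness argument on the $M$-unit sphere inside $k^m$.

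First, writing $v=v_1e_1+\cdots+v_me_m$ in the standard basis and using the ultrametric property of $N$, one obtains
$$N(v)\le\max_{1\le i\le m}|v_i|_p\,N(e_i)\le\bigl(\max_i N(e_i)\bigr)M(v),$$
which yields the first inequality with $c_1=(\max_i N(e_i))^{-1}$. As a byproduct the bound $|N(v)-N(w)|\le N(v-w)\le C_1 M(v-w)$ shows that $N$ is continuous in the topology induced by $M$, which will be needed in the reverse direction.

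For the harder direction, set $S=\{v\in k^m:M(v)=1\}$ and let $\pi\in k$ be a uniformizer. Because $k$ is a local field, its ring of integers $\mathcal{O}_k$ (equal to $\mathbb{Z}_p$ or $\mathbb{F}_p[[T]]$) is compact, so $\mathcal{O}_k^m$ is compact. The subset $\pi\mathcal{O}_k^m$ is the closed $M$-ball of radius $|\pi|_p$, which in a non-archimedean metric is simultaneously open and closed, so $S=\mathcal{O}_k^m\setminus\pi\mathcal{O}_k^m$ is a closed subset of the compact set $\mathcal{O}_k^m$ and hence compact. The continuous function $N$ then attains a minimum value $c$ on $S$, and $c>0$ because $0\notin S$ and $N$ is a norm. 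For any nonzero $v\in k^m$, pick the unique integer $a$ with $M(v)=|\pi|_p^{a}$; then $\pi^{-a}v\in S$, so $N(\pi^{-a}v)\ge c$, and scaling by $|\pi|_p^{a}$ gives $N(v)\ge c\,M(v)$. Taking $c_2=c^{-1}$ completes the argument.

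The main obstacle is really the compactness of the unit sphere $S$, which rests on $k$ being locally compact with a discrete valuation: this is what forces the image of $M$ to lie in the discrete set $\{|\pi|_p^a:a\in\mathbb{Z}\}\cup\{0\}$ and makes the $M$-unit ball compact, allowing the classical ``continuous function on a compact set attains its minimum'' argument to go through. Once these structural features of local fields are in hand, the proof is the non-archimedean analogue of the standard proof that all norms on a finite-dimensional real vector space are equivalent.
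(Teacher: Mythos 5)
Your proof is correct, but note that the paper itself contains no proof of this lemma: it is quoted verbatim from Robert \cite{ref-3}, so the only meaningful comparison is with the argument in that reference. Your easy half, $N(v)\le(\max_iN(e_i))\,M(v)$ via the ultrametric inequality applied to $v=\sum_iv_ie_i$, is the standard one and correctly gives Lipschitz continuity of $N$ for the $M$-topology. Your hard half genuinely differs from the cited source: you use local compactness of $k$, observing that the $M$-unit sphere $S=\mathcal{O}_k^m\setminus\pi\mathcal{O}_k^m$ is a closed subset of the compact set $\mathcal{O}_k^m$ (the ball $\pi\mathcal{O}_k^m$ being clopen), so the continuous function $N$ attains a positive minimum $c$ there, and discreteness of the value group lets you rescale any nonzero $v$ into $S$ to conclude $N(v)\ge c\,M(v)$. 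This is sound for $k=\mathbb{Q}_p$ and $k=\mathbb{F}_p((T))$. The proof in \cite{ref-3} instead establishes equivalence of norms over \emph{any} complete ultrametric field by induction on the dimension, using only completeness (closedness of finite-dimensional subspaces); your compactness route is more elementary and mirrors the classical real-variable argument, but would fail over a non-locally-compact complete field such as $\mathbb{C}_p$, where the reference's argument still applies. Since the paper only ever invokes the lemma for the two locally compact fields above, your proof fully suffices. One small useful byproduct: your explicit constant $c_1=(\max_iN(e_i))^{-1}$ is the one actually forced by the inequality $c_1N(v)\le M(v)$; the paper's later Remark instead takes $c_1=\max_iN(e_i)$, which is only a valid (weaker) choice when $\max_iN(e_i)\le1$, so your computation is worth keeping.
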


\begin{proposition}[\cite{ref-1}]\label{pr-1.5}
Let $N$ be a norm on $k^m$. Let $\mathcal{L}=\mathcal{L}(\alpha_1,\dots,\alpha_n)$ be a lattice of rank $n$ in $k^m$ such that $N(\alpha_{1})\ge\cdots\ge N(\alpha_{n})$. Then $\tilde{\lambda}_1(\mathcal{L})=N(\alpha_1)$ and $\tilde{\lambda}_i(\mathcal{L})\le N(\alpha_i)$ for $2\le i\le n$.
\end{proposition}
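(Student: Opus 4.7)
The plan is to apply the orthogonalization algorithm of Theorem \ref{th-1.1} to $\alpha_1, \dots, \alpha_n$ and track how its norms evolve. The algorithm produces an $N$-orthogonal basis $\beta_1, \dots, \beta_n$ of $\mathcal{L}$ with $N(\beta_1) \ge \cdots \ge N(\beta_n)$; combined with the identification $\tilde{\lambda}_i(\mathcal{L}) = N(\beta_i)$ for orthogonal sorted bases (used tacitly in the proof of Theorem \ref{th-1.2}), the target reduces to showing $N(\beta_i) \le N(\alpha_i)$ for every $i$.

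The case $i = 1$ does not require the algorithm. For any $v = \sum_k c_k \alpha_k \in \mathcal{L}$ with $c_k$ in the ring of integers of $k$, the ultrametric inequality gives $N(v) \le \max_k |c_k|_p N(\alpha_k) \le N(\alpha_1)$; conversely, $\alpha_1 \in \mathcal{L}$ realizes $N(\alpha_1)$. Hence $\tilde{\lambda}_1(\mathcal{L}) = N(\alpha_1)$.

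For $i \ge 2$, I would run Theorem \ref{th-1.1} on $\alpha_1, \dots, \alpha_n$ relative to any $N$-orthogonal basis of $\mathrm{span}(\mathcal{L})$; such a basis exists for every norm on a finite-dimensional nonarchimedean space. The single-step estimate is a routine ultrametric calculation: after the sort of the $e_j$'s in step 3 of the algorithm, one has $|a_{ii}|_p N(e_i) = N(\alpha_i)$, so the reduction coefficient satisfies $|a_{il}/a_{ii}|_p N(\alpha_i) = |a_{il}|_p N(e_i) \le N(\alpha_l)$, and the ultrametric inequality then yields $N(\alpha_l - (a_{il}/a_{ii})\alpha_i) \le N(\alpha_l)$. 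Hence the reduction never increases $N(\alpha_l)$.

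The main obstacle is tracking the sorted norms across iterations. Let $\alpha_j^{(s)}$ denote the $j$-th vector after iteration $s$ and write $f_s$ for the decreasing rearrangement of the multiset $\{N(\alpha_j^{(s)}) : j > s\}$. The combined effect of the sort and the reductions at iteration $s+1$ gives $f_{s+1}[k] \le f_s[k+1]$ for every valid $k$: the sort merely relabels, the maximum becomes the pivot $\beta_{s+1}$ and is removed from the remaining multiset, and the reductions only decrease the surviving norms. A straightforward induction then yields $f_s[k] \le f_0[k+s] = N(\alpha_{k+s})$. Applying this at the pivot of iteration $i$ gives $N(\beta_i) = f_{i-1}[1] \le f_0[i] = N(\alpha_i)$, completing the argument.
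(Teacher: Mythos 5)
The paper does not actually prove Proposition \ref{pr-1.5}: it is imported from \cite{ref-1} as a black box, so there is no in-paper proof to compare yours against. Judged on its own, your reconstruction is correct. The two substantive steps both check out: (i) after the two sorts in an iteration, $N(\alpha_i)=N(a_{ii}e_i)$ and $N(a_{il}e_i)\le N(\alpha_l)$ by $N$-orthogonality of the $e_j$, so $N\bigl(\tfrac{a_{il}}{a_{ii}}\alpha_i\bigr)\le N(\alpha_l)$ and the ultrametric inequality gives that the reduction never increases $N(\alpha_l)$ (this also shows $|a_{il}/a_{ii}|_p\le 1$, so the operation is unimodular); (ii) the order-statistic bookkeeping $f_{s+1}[k]\le f_s[k+1]$, hence $N(\beta_i)=f_{i-1}[1]\le f_0[i]=N(\alpha_i)$, is sound because removing one copy of the maximum shifts the sorted multiset by one and entrywise decreases only lower each order statistic. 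Two ingredients are used without proof and you should state them as explicit dependencies: the existence of an $N$-orthogonal basis of $\mathrm{span}(\mathcal{L})$ for an arbitrary norm $N$ (required as input to Theorem \ref{th-1.1}; this holds over local fields but is not automatic from the definitions), and the identification $\tilde{\lambda}_i(\mathcal{L})=N(\beta_i)$ for a decreasingly sorted $N$-orthogonal basis, which this paper also uses without comment in the proof of Theorem \ref{th-1.2}. The latter is the one point where circularity could lurk, since it is exactly the case of Proposition \ref{pr-1.5} with equality; you need it to be established in \cite{ref-1} directly from the definition of the successive maxima (note it is not the naive ``max-min over linearly independent vectors'' computation, since an orthogonal basis with distinct norms still admits $n$ independent lattice vectors all of maximal norm, e.g.\ $\beta_1,\beta_1+\beta_2,\dots$). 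With that caveat recorded, the argument stands.
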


Notice that the above lemma and proposition is proved only for $k=\mathbb{Q}_p$, but it is easy to see that the proposition also holds for $k=\mathbb{F}_p((T))$.\par
Now we can prove the transference theorem for a general norm $N$ on $k^m$.

\begin{theorem}\label{th-1.6}
Let $k=\mathbb{Q}_p$ or $k=\mathbb{F}_p((T))$. Suppose $\Lambda$ is any $p$-adic lattice of full rank in $k^n$, $\Lambda^*$ is its dual. Let $N$ be a norm on $k$. Then we have
$$\tilde{\lambda}_i(\Lambda)\cdot\tilde{\lambda}_{n+1-i}(\Lambda^*)\le c^{\prime}$$
for $1\le i\le n$, where $\tilde{\lambda}_i(\Lambda)$ is the $i$-th successive maxima of the lattice $\Lambda$, $\tilde{\lambda}_{n+1-i}(\Lambda)$ is the $(n+1-i)$-th successive maxima of the lattice $\Lambda^*$ with respect to the norm $N$ and $c^{\prime}$ is a positive constant depending on $N$.
\end{theorem}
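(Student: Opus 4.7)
The plan is to bootstrap the exact transference identity for the maximum norm $M$ proved in Theorem~\ref{th-1.2} to an inequality for a general norm $N$, using the norm-equivalence from Lemma~\ref{le-1.4}. The key observation is that the lattices $\Lambda$ and $\Lambda^*$ are intrinsically defined (via the duality pairing), independent of the choice of norm; only the successive maxima depend on $N$. So the task reduces to comparing $\tilde{\lambda}_i^N$ with $\tilde{\lambda}_i^M$ for each individual lattice.

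First I would unwind the definition of the successive maxima: for a $p$-adic lattice $\mathcal{L}$ of rank $n$ and any norm $\nu$,
$$\tilde{\lambda}_i(\mathcal{L}) = \max\bigl\{\min_{1\le j\le i}\nu(v_j) : v_1,\dots,v_i\in\mathcal{L}\text{ linearly independent}\bigr\},$$
which is consistent with the values exhibited in Proposition~\ref{pr-1.5} and in Theorem~\ref{th-1.2}. Using this characterization together with the inequalities $c_1 N(v)\le M(v)\le c_2 N(v)$ from Lemma~\ref{le-1.4}, I obtain the sandwich
$$c_1\,\tilde{\lambda}_i^{\,N}(\mathcal{L})\ \le\ \tilde{\lambda}_i^{\,M}(\mathcal{L})\ \le\ c_2\,\tilde{\lambda}_i^{\,N}(\mathcal{L})$$
by applying the bounds term-by-term inside the $\min$ and then taking the $\max$ over linearly independent tuples; the same tuples are admissible on both sides, so no further argument is needed.

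Next I would apply this sandwich to both $\Lambda$ and $\Lambda^{*}$. In particular, $\tilde{\lambda}_i^{\,N}(\Lambda)\le \tilde{\lambda}_i^{\,M}(\Lambda)/c_1$ and $\tilde{\lambda}_{n+1-i}^{\,N}(\Lambda^{*})\le \tilde{\lambda}_{n+1-i}^{\,M}(\Lambda^{*})/c_1$. Multiplying the two and invoking the identity $\tilde{\lambda}_i^{\,M}(\Lambda)\cdot\tilde{\lambda}_{n+1-i}^{\,M}(\Lambda^{*})=1$ from Theorem~\ref{th-1.2} yields
$$\tilde{\lambda}_i^{\,N}(\Lambda)\cdot\tilde{\lambda}_{n+1-i}^{\,N}(\Lambda^{*})\ \le\ \frac{1}{c_1^{\,2}},$$
so one may set $c'=1/c_1^{\,2}$, a constant depending only on $N$ through Lemma~\ref{le-1.4}.

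I do not expect any genuine obstacle, since both ingredients are already available. The only points that require a little care are (i) confirming that the same $c_1$ works uniformly for all vectors in $\Lambda$ and in $\Lambda^{*}$ (which is automatic because Lemma~\ref{le-1.4} is stated for all of $k^n$), and (ii) noticing that $\Lambda^{*}$ is the same lattice in both settings, so the inequalities for the two norms apply to identical successive-maxima problems. The statement as written contains a harmless typo (``norm on $k$'' instead of ``norm on $k^n$''); I would silently read it in the latter sense, matching the hypothesis of Lemma~\ref{le-1.4}.
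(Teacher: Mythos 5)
Your proof is correct, but it takes a genuinely different route from the paper's. The paper works concretely with the triangular $M$-orthogonalized basis $B$ and its dual $D$: it sorts the columns by their $N$-norms, applies Proposition \ref{pr-1.5} separately to $\Lambda$ and $\Lambda^*$, and then needs a small combinatorial step (choosing the index $j\le i$ maximizing $N(d_{s_k})$ over $k\le i$, and using $N(b_{s_j})\ge N(b_{s_i})$) before invoking the norm equivalence on the single pair $b_{s_j}$, $d_{s_j}$. You instead isolate a clean general principle: under the max-min characterization of the successive maxima, the bounds $c_1N(v)\le M(v)\le c_2N(v)$ of Lemma \ref{le-1.4} immediately give $c_1\tilde{\lambda}_i^{\,N}(\mathcal{L})\le\tilde{\lambda}_i^{\,M}(\mathcal{L})\le c_2\tilde{\lambda}_i^{\,N}(\mathcal{L})$ for any lattice $\mathcal{L}$, and then the exact identity of Theorem \ref{th-1.2} does the rest. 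Both arguments land on the same constant $c'=c_1^{-2}$. Your version is shorter, avoids Proposition \ref{pr-1.5} entirely, and makes it transparent that the loss of a factor $c_1^{-2}$ comes from comparing the two norms once on $\Lambda$ and once on $\Lambda^*$; its only dependency is the max-min definition of $\tilde{\lambda}_i$, which the paper never writes down explicitly but which is indeed the definition in the cited source, so you should state and prove the comparison inequality as a standalone lemma rather than asserting it in passing. Your observation that $\Lambda^*$ is defined by the duality pairing and is therefore norm-independent is also correct and worth keeping, since it is what licenses transferring the $M$-norm identity to the $N$-norm problem for the same pair of lattices.
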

\begin{proof}
Suppose $\Lambda=\mathcal{L}(B)$ and $D=(d_1,\dots,d_n)$ is the dual basis of $B=(b_1,\dots,b_n)$. We have $\Lambda^*=\mathcal{L}(D)$. Without loss of generality, we may assume 
$$B=\begin{pmatrix}
b_{11} & 0 & \cdots & 0 \\
b_{21} & b_{22} & \cdots & 0 \\
\vdots & \vdots & \ddots & \vdots \\
b_{n1} & b_{n2} & \cdots & b_{nn}
\end{pmatrix}$$
and $\left|b_{ii}\right|_p=\max{\left\{\left|b_{jk}\right|_p:i\le k \le j\le n\right\}}$. Notice that since $e_1,\dots,e_n$ may not be an $N$-orthogonal basis of $k^n$, the above basis is not an $N$-orthogonal basis of $\Lambda$ in general. However, we can still perform the transformation in the algorithm ``orthogonalization with orthogonal bases'' to obtain it.\par
Suppose that
$$N(b_{s_1})\ge N(b_{s_2})\ge\cdots\ge N(b_{s_n})$$
where $\{s_1,s_2,\dots,s_n\}=\{1,2,\dots,n\}$. Then by Proposition \ref{pr-1.5}, we have
$$\tilde{\lambda}_{i}(\Lambda)\le N(d_{s_i})$$
for $1\le i\le n$.\par
The dual basis $D$ is the same as in Theorem \ref{th-1.2}. For any $1\le i\le n$, let $1\le j\le i$ be the subscript such that
$$N(d_{s_j})=\max_{1\le k\le i}{N(d_{s_k})}.$$
By Proposition \ref{pr-1.5}, we have
$$\tilde{\lambda}_{n-i+1}(\Lambda^*)\le \max_{1\le k\le i}{N(d_{s_k})}=N(d_{s_j}).$$
By Lemma \ref{le-1.4}, two norms $M$ and $N$ are equivalent, i.e., there are two positive constants $c_1$ and $c_2$ such that
$$c_1N(v)\le M(v)\le c_2N(v)$$
for any $v\in k^n$. Then we have
\begin{equation*}
\begin{split}
\tilde{\lambda}_i(\Lambda)\cdot\tilde{\lambda}_{n-i+1}(\Lambda^*)&\le N(b_{s_i})\cdot N(d_{s_j})\\
&\le N(b_{s_j})\cdot N(d_{s_j})\\
&\le c_1^{-2}M(b_{s_j})\cdot M(d_{s_j})\\
&=c_1^{-2}
\end{split}
\end{equation*}
for $1\le i\le n$. We can take $c^{\prime}=c_1^{-2}$.
\end{proof}

{\bf Remark.} Since we can choose $c_1=\max_{1\le i\le m}{N(e_i)}$, we can choose
$$c^{\prime}=\left(\max_{1\le i\le m}{N(e_i)}\right)^{-2}.$$

\section{Minkowski's Second Theorem}

We can prove the following theorem, which can be viewed as a $p$-adic analogue of Minkowski's second theorem for Euclidean lattices.

\begin{theorem}Let $k=\mathbb{Q}_p$ or $k=\mathbb{F}_p((T))$. 
\begin{enumerate}
\item[(1)] Suppose $\Lambda$ is any $p$-adic lattice of full rank in $k^n$. Then we have
$$\prod_{i=1}^{n}{\tilde{\lambda}_{i}(\Lambda)}=\det(\Lambda),$$
where $\tilde{\lambda}_{i}(\Lambda)$ $(1\le i\le n)$ are the $i$-th successive maxima of the lattice $\Lambda$ with respect to the norm $M$.
\item[(2)] Suppose $\Lambda$ is any $p$-adic lattice of full rank in $k^n$.  Let $N$ be a norm on $k^n$. Then we have
$$\prod_{i=1}^{n}{\tilde{\lambda}_{i}(\Lambda)}\le c^{\prime\prime}\cdot\det(\Lambda),$$
where $\tilde{\lambda}_{i}(\Lambda)$ $(1\le i\le n)$ are the $i$-th successive maxima of the lattice $\Lambda$ with respect to the norm $N$.
\end{enumerate}
\end{theorem}
\begin{proof}
Suppose $\Lambda=\mathcal{L}(B)$. Same as Theorem \ref{th-1.2}, we can run the algorithm ``orthogonalization with orthogonal bases'' to obtain the following $M$-orthogonal basis of $\Lambda$.
$$B=(b_1,b_2,\dots,b_n)=\begin{pmatrix}
b_{11} & 0 & \cdots & 0 \\
b_{21} & b_{22} & \cdots & 0 \\
\vdots & \vdots & \ddots & \vdots \\
b_{n1} & b_{n2} & \cdots & b_{nn}
\end{pmatrix}$$
where $\left|b_{ii}\right|_p=\max{\left\{\left|b_{jk}\right|_p:i\le k \le j\le n\right\}}$. Then we have $\tilde{\lambda}_i(\Lambda)=M(b_i)=\left|b_{ii}\right|_p$ for $1\le i\le n$. Therefore,
$$\prod_{i=1}^{n}{\tilde{\lambda}_{i}(\Lambda)}=\prod_{i=1}^{n}{M(b_i)}=\prod_{i=1}^{n}{\left|b_{ii}\right|_p}=\det(\Lambda).$$
This proves (1). Below, we prove (2).\par
By Proposition \ref{pr-1.5} and Lemma \ref{le-1.4}, we have
$$\prod_{i=1}^{n}{\tilde{\lambda}_{i}(\Lambda)}\le\prod_{i=1}^{n}{N(b_i)}\le c_1^{-n}\cdot\prod_{i=1}^{n}{M(b_i)}=c_1^{-n}\cdot\det(\Lambda).$$
We can take $c^{\prime\prime}=c_1^{-n}$.
\end{proof}

{\bf Remark.} Since we can choose $c_1=\max_{1\le i\le m}{N(e_i)}$, we can choose
$$c^{\prime\prime}=\left(\max_{1\le i\le m}{N(e_i)}\right)^{-n}.$$

\begin{example}
Let $K=\mathbb{Q}_2(\pi)$, where $\pi$ is a root of $f(x)=x^3-2$. Let $N$ be the $2$-adic absolute value on $K$. Since $f$ is an Eisenstein polynomial, $K$ is totally ramified of degree $3$ and
$$N(\pi)=2^{-\frac{1}{3}}.$$
Let $e_1=\pi$, $e_2=\pi^2$ and $e_3=2$. Then we can choose
$$c^{\prime}=\left(\max_{1\le i\le 3}{N(e_i)}\right)^{-2}=2^{\frac{2}{3}}.$$
and
$$c^{\prime\prime}=\left(\max_{1\le i\le 3}{N(e_i)}\right)^{-3}=2.$$
Let the basis of $\Lambda$ be
$$B=\begin{pmatrix}
1 & 2 & 0 \\
0 & 2 & 2 \\
2 & 0 & 4
\end{pmatrix}.$$
Since $\pi,\pi^2,2$ is an $N$-orthogonal basis of $K$, we can run the algorithm ``orthogonalization with orthogonal bases'' to obtain an $N$-orthogonal basis of $\Lambda$,
$$B^{\prime}=\begin{pmatrix}
1 & 0 & 0 \\
0 & 2 & 0 \\
2 & -4 & 8
\end{pmatrix}.$$
Hence $\tilde{\lambda}_{1}(\Lambda)=2^{-\frac{1}{3}}$, $\tilde{\lambda}_{2}(\Lambda)=2^{-\frac{5}{3}}$ and $\tilde{\lambda}_{3}(\Lambda)=\frac{1}{16}$ with respect to the norm $N$. The dual basis
$$D=\frac{1}{8}\begin{pmatrix}
4 & -4 & 2 \\
2 & 2 & -1 \\
-2 & 2 & 1
\end{pmatrix}.$$
We can run the algorithm ``orthogonalization with orthogonal bases'' to obtain an $N$-orthogonal basis of $\Lambda^*$,
$$D^{\prime}=\frac{1}{8}\begin{pmatrix}
8 & 0 & 2 \\
0 & 0 & -1 \\
0 & 4 & 1
\end{pmatrix}.$$
Hence $\tilde{\lambda}_{1}(\Lambda^*)=4$, $\tilde{\lambda}_{2}(\Lambda^*)=1$ and $\tilde{\lambda}_{3}(\Lambda^*)=2^{-\frac{1}{3}}$ with respect to the norm $N$. We have
$$\tilde{\lambda}_{1}(\Lambda)\cdot\tilde{\lambda}_{3}(\Lambda^*)=2^{-\frac{2}{3}}\le 2^{\frac{2}{3}},$$
$$\tilde{\lambda}_{2}(\Lambda)\cdot\tilde{\lambda}_{2}(\Lambda^*)=2^{-\frac{5}{3}}\le 2^{\frac{2}{3}},$$
and
$$\tilde{\lambda}_{3}(\Lambda)\cdot\tilde{\lambda}_{1}(\Lambda^*)=\frac{1}{4}\le 2^{\frac{2}{3}}.$$
On the other hand, we have
$$\tilde{\lambda}_{1}(\Lambda)\cdot\tilde{\lambda}_{2}(\Lambda)\cdot\tilde{\lambda}_{3}(\Lambda)=2^{-6}\le2^{-3}= 2\cdot\det(B),$$
and
$$\tilde{\lambda}_{1}(\Lambda^*)\cdot\tilde{\lambda}_{2}(\Lambda^*)\cdot\tilde{\lambda}_{3}(\Lambda^*)=2^{\frac{5}{3}}\le2^{5}=2\cdot\det(D).$$
\end{example}

\section{Dual of Orthogonal Basis}

It is interesting to ask whether the dual basis of an orthogonal basis is still an orthogonal basis. It turns out that for the norm $M$, the answer is affirmative. However, this conjecture does not hold for all norms.\par
We need the following theorems.

\begin{theorem}[\cite{ref-1}]\label{th-2.1}
Let $N$ be a norm on $k^m$. Let $\mathcal{L}=\mathcal{L}(\alpha_1,\dots,\alpha_n)$ be a lattice of rank $n$ in $k^m$ with an $N$-orthogonal basis $\alpha_1,\dots,\alpha_n$. If $\beta_1,\dots,\beta_n$ can be obtained from $\alpha_1,\dots,\alpha_n$ by the following operations:
\begin{enumerate}
\item $\alpha_i\leftarrow k\alpha_i$ for some $k\in\mathbb{Z}_p\setminus p\mathbb{Z}_p$,
\item $\alpha_i\leftrightarrow\alpha_j$,
\item $\alpha_i\leftarrow\alpha_i+k\alpha_j$ for some $k\in\mathbb{Z}_p$ such that $N(k\alpha_j)\le N(\alpha_i)$,
\end{enumerate}
then $\beta_1,\dots,\beta_n$ is also an $N$-orthogonal basis of $\mathcal{L}$.
\end{theorem}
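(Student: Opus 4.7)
The plan is to verify each of the three operations separately, showing (a) that the resulting tuple is still a basis of $\mathcal{L}$, and (b) that the $N$-orthogonality identity $N(\sum_j c_j\beta_j)=\max_j|c_j|_p N(\beta_j)$ continues to hold for all $c_1,\ldots,c_n\in k$. Since the operations are invertible under their stated hypotheses (a unit scalar $k\in\mathbb{Z}_p^\times$ in (1), a transposition in (2), and $\alpha_i=\beta_i-k\alpha_j$ in (3)), the ``still a basis of $\mathcal{L}$'' part is essentially bookkeeping; the substance of the proof lies in the orthogonality check.

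Operations (1) and (2) I expect to dispatch in a line each. For (1), the scaling $\beta_i=k\alpha_i$ with $|k|_p=1$ preserves every individual norm, and a general linear combination just rewrites coefficients as $c_i\mapsto c_ik$, whose $p$-adic absolute value is unchanged. For (2), $N$-orthogonality is manifestly symmetric in the indexing.

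The main work is operation (3): replace $\alpha_i$ with $\beta_i:=\alpha_i+k\alpha_j$ under the hypothesis $N(k\alpha_j)\le N(\alpha_i)$. For an arbitrary combination
\[
\sum_l c_l\beta_l \;=\; c_i\alpha_i + (c_j+c_ik)\alpha_j + \sum_{l\ne i,j} c_l\alpha_l,
\]
the orthogonality of the $\alpha_l$'s gives $N(\sum_l c_l\beta_l)=\max\{|c_i|_p N(\alpha_i),\;|c_j+c_ik|_p N(\alpha_j),\;\max_{l\ne i,j}|c_l|_p N(\alpha_l)\}$. I would compare this to the target $\max\{|c_i|_p N(\beta_i),|c_j|_p N(\alpha_j),\ldots\}$, first observing $N(\beta_i)=N(\alpha_i)$ by $N$-orthogonality together with the hypothesis $N(k\alpha_j)\le N(\alpha_i)$. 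The key inequality $|c_j+c_ik|_p N(\alpha_j)\le\max\{|c_j|_p N(\alpha_j),|c_i|_p N(\alpha_i)\}$ follows from the ultrametric inequality and the hypothesis, giving one direction. For the reverse direction, the interesting case is $|c_j|_p N(\alpha_j)>|c_i|_p N(\alpha_i)$; then $|c_ik|_p N(\alpha_j)\le|c_i|_p N(\alpha_i)<|c_j|_p N(\alpha_j)$ forces $|c_ik|_p<|c_j|_p$, and the strict ultrametric inequality yields $|c_j+c_ik|_p=|c_j|_p$, which recovers the missing term.

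The main obstacle I anticipate is precisely this last strict-inequality case in operation (3): one has to notice that the hypothesis $N(k\alpha_j)\le N(\alpha_i)$ is used twice—once to maintain $N(\beta_i)=N(\alpha_i)$, and once to guarantee that $c_ik$ cannot overwhelm $c_j$ in the non-archimedean sense. Without that hypothesis, operation (3) would generally destroy orthogonality, and this is the only place in the argument where the quantitative content of the bound enters.
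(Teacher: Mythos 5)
This statement is imported from \cite{ref-1} and the paper gives no proof of its own, so there is nothing internal to compare against; judged on its own terms, your argument is correct and is the standard direct verification one would expect the cited source to use. The reduction to checking the single identity $\max\{|c_i|_p N(\alpha_i),\,|c_j+c_ik|_p N(\alpha_j)\}=\max\{|c_i|_p N(\alpha_i),\,|c_j|_p N(\alpha_j)\}$ for operation (3), including the strict-inequality case where the ultrametric equality $|c_j+c_ik|_p=|c_j|_p$ recovers the term $|c_j|_p N(\alpha_j)$, is exactly the right point to isolate, and your use of the hypothesis $N(k\alpha_j)\le N(\alpha_i)$ in both places is sound.
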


\begin{theorem}[\cite{ref-1}]\label{th-2.2}
Let $N$ be a norm on $k^m$. Let $\mathcal{L}$ be a lattice of rank $n$ in $k^m$. Then any two  $N$-orthogonal bases of $\mathcal{L}$ can be obtained from each other by the following operations:
\begin{enumerate}
\item $\alpha_i\leftarrow k\alpha_i$ for some $k\in\mathbb{Z}_p\setminus p\mathbb{Z}_p$,
\item $\alpha_i\leftrightarrow\alpha_j$,
\item $\alpha_i\leftarrow\alpha_i+k\alpha_j$ for some $k\in\mathbb{Z}_p$ such that $N(k\alpha_j)\le N(\alpha_i)$.
\end{enumerate}
\end{theorem}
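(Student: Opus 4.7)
The plan is to process basis vectors one at a time in sorted order, showing that at each stage a suitable pivot exists so that operations 1--3 can convert the current basis vector into the target one. First, I would use operation 2 to reorder both bases so that $N(\alpha_1)\ge\cdots\ge N(\alpha_n)$ and $N(\beta_1)\ge\cdots\ge N(\beta_n)$. Because both are $N$-orthogonal bases of $\mathcal{L}$, the sorted norms must agree: $N(\alpha_i)=N(\beta_i)$ for each $i$, since each sequence equals the sequence of successive maxima of $\mathcal{L}$ (a consequence of Proposition \ref{pr-1.5}).

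Then, for $i=1,2,\dots,n$ in turn, I would convert the $i$-th vector of the current basis into $\beta_i$ while keeping $\beta_1,\dots,\beta_{i-1}$ fixed. Writing
$$\beta_i=\sum_{k<i}e_k\beta_k+\sum_{j\ge i}d_{ij}\alpha_j$$
in the current basis $(\beta_1,\dots,\beta_{i-1},\alpha_i,\dots,\alpha_n)$, I would locate an index $j_0\ge i$ with $|d_{ij_0}|_p=1$ and $N(\alpha_{j_0})=N(\beta_i)$, swap $\alpha_i\leftrightarrow\alpha_{j_0}$ (operation 2, legitimate because the two vectors share the same norm), rescale the new $\alpha_i$ by the unit $d_{ij_0}$ (operation 1), and then successively add the remaining $e_k\beta_k$ and $d_{ij}\alpha_j$ terms (operation 3). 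Each addition is admissible because every summand has norm at most $N(\beta_i)$, which is preserved as the norm of the current $i$-th vector throughout (the orthogonality of the current basis being guaranteed at each step by Theorem \ref{th-2.1}).

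The heart of the proof, and the main obstacle, is to verify that the pivot index $j_0\ge i$ with the two required properties always exists---that is, that the maximum in the expansion above is attained at some $\alpha_j$ with $j\ge i$, and not only at the previously constructed $\beta_k$'s. I would argue by contradiction: if the maximum were attained only among the $e_k\beta_k$ terms, then $\gamma:=\beta_i-\sum_{k<i}e_k\beta_k=\sum_{j\ge i}d_{ij}\alpha_j$ would satisfy $N(\gamma)<N(\beta_i)$ by orthogonality of the current basis, whereas orthogonality of the \emph{target} basis $(\beta_1,\dots,\beta_n)$ forces $N(\gamma)=\max\bigl(\max_{k<i}|e_k|_p N(\beta_k),\,N(\beta_i)\bigr)=N(\beta_i)$, a contradiction. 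Once this cross-orthogonality argument is in hand, the remainder of the algorithm goes through mechanically, and applying it for $i=1,\dots,n$ yields the desired decomposition of the change-of-basis into operations of types 1, 2, and 3.
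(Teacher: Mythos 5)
The paper itself gives no proof of Theorem \ref{th-2.2}; it is quoted from \cite{ref-1}, so there is nothing internal to compare your argument against. Taken on its own terms, your column-reduction induction is sound, and its key step --- the cross-orthogonality contradiction showing that $\max_{j\ge i}N(d_{ij}\alpha_j)$ cannot fall below $N(\beta_i)$ --- is exactly the right lever: expanding $\gamma=\beta_i-\sum_{k<i}e_k\beta_k$ in the target basis gives $N(\gamma)\ge N(\beta_i)$ by orthogonality of the $\beta$'s, while if no pivot existed the expansion in the current basis would give $N(\gamma)<N(\beta_i)$.

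Two points need to be shored up. First, the claim that the sorted norm sequences of the two orthogonal bases coincide is not literally a consequence of Proposition \ref{pr-1.5}, which only gives $\tilde{\lambda}_i(\mathcal{L})\le N(\alpha_i)$; you need the equality $\tilde{\lambda}_i(\mathcal{L})=N(\alpha_i)$ for a sorted \emph{orthogonal} basis (this is the fact the paper itself uses without comment in the proof of Theorem \ref{th-1.2}, and it is established in \cite{ref-1}; cite it explicitly). Second, your pivot must satisfy both $\left|d_{ij_0}\right|_p=1$ and $N(\alpha_{j_0})=N(\beta_i)$, but the contradiction argument only yields an index with $N(d_{ij_0}\alpha_{j_0})=N(\beta_i)$; a priori this allows $\left|d_{ij_0}\right|_p<1$ together with $N(\alpha_{j_0})>N(\beta_i)$, in which case the rescaling in operation 1 would be illegal. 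To rule that out, observe that the current basis is orthogonal (by Theorem \ref{th-2.1}), hence its multiset of norms is again $\{\tilde{\lambda}_1,\dots,\tilde{\lambda}_n\}$; since positions $1,\dots,i-1$ already carry $\tilde{\lambda}_1,\dots,\tilde{\lambda}_{i-1}$, every remaining vector has norm at most $\tilde{\lambda}_i=N(\beta_i)$, and combined with $d_{ij_0}\in\mathbb{Z}_p$ the chain $N(\beta_i)=\left|d_{ij_0}\right|_pN(\alpha_{j_0})\le N(\alpha_{j_0})\le N(\beta_i)$ forces both required equalities at once. With these two observations supplied, the induction closes and the proof is correct.
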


Notice that the above theorems are proved only for $k=\mathbb{Q}_p$, but it is easy to see that these theorems also hold for $k=\mathbb{F}_p((T))$.\par
Now we can prove our main theorem of this section.

\begin{theorem}\label{th-2.3}
Suppose $\Lambda$ is any $p$-adic lattice of full rank in $k^n$, $\Lambda^*$ is its dual. Suppose $\Lambda=\mathcal{L}(B)$ and $D$ is the dual basis of $B$. If $B$ is an $M$-orthogonal basis of $\Lambda$, then $D$ is an $M$-orthogonal basis of $\Lambda^*$.
\end{theorem}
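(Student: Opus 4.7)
\emph{Proof proposal.} The plan is to reduce to the specific lower-triangular $M$-orthogonal basis constructed in the proof of Theorem~\ref{th-1.2}, and then transport the orthogonality of its dual to $D$ along the operations supplied by Theorem~\ref{th-2.2}.

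First, I would invoke Theorem~\ref{th-1.2}: the orthogonalisation algorithm (combined with coordinate permutations, which are $M$-isometries) produces an auxiliary $M$-orthogonal basis $B'=(b'_1,\ldots,b'_n)$ of $\Lambda$ in lower-triangular form, whose dual $D'=(d'_1,\ldots,d'_n)$ is already shown to be an $M$-orthogonal basis of $\Lambda^{\ast}$ and to satisfy the pointwise reciprocal identity $M(b'_i)\cdot M(d'_i)=|b'_{ii}|_p\cdot|b'_{ii}|_p^{-1}=1$ for every $i$. This auxiliary pair $(B',D')$ will serve as the base case.

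Next, Theorem~\ref{th-2.2} gives a finite sequence of allowed operations converting $B'$ into $B$. Writing the associated change of basis as $B=B'V$, the duals are related by $D=D'(V^{T})^{-1}$, and a direct computation shows that each allowed column operation on $B'$ induces an operation of the same type on $D'$: a swap $b_i\leftrightarrow b_j$ induces $d_i\leftrightarrow d_j$; a unit scaling $b_i\leftarrow k\,b_i$ induces $d_i\leftarrow k^{-1}d_i$; and a type-3 addition $b_i\leftarrow b_i+k\,b_j$ induces $d_j\leftarrow d_j-k\,d_i$. The core of the argument is then an induction along the sequence with the joint invariant: at every intermediate stage the current bases of $\Lambda$ and $\Lambda^{\ast}$ are both $M$-orthogonal and satisfy $M(b_i)\cdot M(d_i)=1$ for all $i$. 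Swaps and unit scalings preserve both clauses of the invariant automatically; for a type-3 step one checks by an ultrametric computation that the norms of the updated vectors coincide with those of the old ones, restoring the reciprocal identity at every index.

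The main (and really the only) technical point is the size-condition transfer in operation~3. Starting from the allowed hypothesis $|k|_p\,M(b_j)\le M(b_i)$ on the primal side, the reciprocal invariant at the current stage rewrites this as $|k|_p\,M(d_i)\le M(d_j)$, which is exactly the size constraint required for the induced move $d_j\leftarrow d_j-k\,d_i$ to count as an allowed operation on the orthogonal basis of $\Lambda^{\ast}$; Theorem~\ref{th-2.1} then propagates orthogonality across the step. Without the reciprocal invariant one could not conclude that the induced operation is legal, so the whole argument hinges on carrying that invariant through the induction, with Theorem~\ref{th-1.2} supplying it at the base. Running the induction from $(B',D')$ to $(B,D)$ yields the conclusion that $D$ is an $M$-orthogonal basis of $\Lambda^{\ast}$.
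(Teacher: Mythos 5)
Your proposal is correct and follows essentially the same route as the paper's own proof: reduce to the triangular $M$-orthogonal basis from Theorem~\ref{th-1.2} as the base case, decompose the change of basis via Theorem~\ref{th-2.2}, and induct while carrying the joint invariant that both bases stay $M$-orthogonal and satisfy $M(b_i)\cdot M(d_i)=1$, using that invariant to transfer the size condition of operation~3 to the dual side. You correctly identified this size-condition transfer as the crux, which is exactly where the paper's argument also concentrates.
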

\begin{proof}
According to the proof of Theorem \ref{th-1.2}, if the $M$-orthogonal basis of $\Lambda$ is of the form
$$B^{\prime}=\begin{pmatrix}
b_{11}^{\prime} & 0 & \cdots & 0 \\
b_{21}^{\prime} & b_{22}^{\prime} & \cdots & 0 \\
\vdots & \vdots & \ddots & \vdots \\
b_{n1}^{\prime} & b_{n2}^{\prime} & \cdots & b_{nn}^{\prime}
\end{pmatrix}$$
where $\left|b_{ii}^{\prime}\right|_p=\max{\left\{\left|b_{jk}^{\prime}\right|_p:i\le k \le j\le n\right\}}$, then the dual basis of $B$ will be of the form
$$D^{\prime}=\begin{pmatrix}
d_{11}^{\prime} & d_{12}^{\prime} & \cdots & d_{1n}^{\prime} \\
0 & d_{22}^{\prime} & \cdots & d_{2n}^{\prime} \\
\vdots & \vdots & \ddots & \vdots \\
0 & 0 & \cdots & d_{nn}^{\prime}
\end{pmatrix}.$$
where $\left|d_{ii}^{\prime}\right|_p=\max{\left\{\left|d_{jk}^{\prime}\right|_p:1\le j\le k \le i\right\}}$. Hence it is an $M$-orthogonal basis of $\Lambda^*$.\par
Now let $B$ be a general $M$-orthogonal basis of $\Lambda$. By Theorem \ref{th-2.2}, $B$ can be obtained from $B^{\prime}$ by the following operations:
\begin{enumerate}
\item $b_i\leftarrow kb_i$ for some $k\in\mathbb{Z}_p\setminus p\mathbb{Z}_p$,
\item $b_i\leftrightarrow b_j$,
\item $b_i\leftarrow b_i+kb_j$ for some $k\in\mathbb{Z}_p$ such that $N(kb_j)\le N(b_i)$.
\end{enumerate}
Write $B=B^{\prime}E$ where $E=E_1E_2\cdots E_s$ and $E_i$ $(1\le i\le s)$ is a fundamental column transformation matrix of one of the above three forms. Then
$$D=D^{\prime}(E^{-1})^{\mathsf{T}}=D^{\prime}(E_1^{-1})^{\mathsf{T}}(E_2^{-1})^{\mathsf{T}}\cdots (E_s^{-1})^{\mathsf{T}}.$$
Let
$$B_t=B^{\prime}E_1E_2\cdots E_t=\left(b_1^{(t)},\dots,b_n^{(t)}\right)$$
and
$$D_t=D^{\prime}(E_1^{-1})^{\mathsf{T}}(E_2^{-1})^{\mathsf{T}}\cdots (E_t^{-1})^{\mathsf{T}}=\left(d_1^{(t)},\dots,d_n^{(t)}\right)$$
be the basis and dual basis after the $t$-th fundamental column transformation.\par
Set $B_0=B^{\prime}$ and $D_0=D^{\prime}$. We claim that $d_1^{(t)},\dots,d_n^{(t)}$ is an $M$-orthogonal basis of $\Lambda^*$ and $M(b_i^{(t)})\cdot M(d_i^{(t)})=1$ for any $1\le i\le n$ and $0\le t\le s$. When $t=0$ the claim is proved in Theorem \ref{th-1.2}. Suppose that the claim holds for some $0\le t\le s-1$.\par
If $E_{t+1}$ is the operation $1$, then $(E_{t+1}^{-1})^{\mathsf{T}}$ will be the operation $d_i^{(t)}\leftarrow k^{-1}d_i^{(t)}$, also of the form 1. By Theorem \ref{th-2.1}, $d_1^{(t+1)},\dots,d_n^{(t+1)}$ is an $M$-orthogonal basis of $\Lambda^*$. On the other hand, we have
$$M\left(b_i^{(t+1)}\right)\cdot M\left(d_i^{(t+1)}\right)=M\left(b_i^{(t)}\right)\cdot M\left(d_i^{(t)}\right)=1.$$
Hence the claim holds for $t+1$.\par
If $E_{t+1}$ is the operation $2$, then $(E_{t+1}^{-1})^{\mathsf{T}}$ will be the operation $d_i^{(t)}\leftrightarrow d_j^{(t)}$. also of the form 2. By Theorem \ref{th-2.1}, $d_1^{(t+1)},\dots,d_n^{(t+1)}$ is an $M$-orthogonal basis of $\Lambda^*$. On the other hand, we have
$$M\left(b_i^{(t+1)}\right)\cdot M\left(d_i^{(t+1)}\right)=M\left(b_j^{(t)}\right)\cdot M\left(d_j^{(t)}\right)=1$$
and
$$M\left(b_j^{(t+1)}\right)\cdot M\left(d_j^{(t+1)}\right)=M\left(b_i^{(t)}\right)\cdot M\left(d_i^{(t)}\right)=1.$$
Hence the claim holds for $t+1$.\par
If $E_{t+1}$ is the operation $3$, then
$$N\left(kb_j^{(t)}\right)\le N\left(b_i^{(t)}\right)$$
and $(E_{t+1}^{-1})^{\mathsf{T}}$ will be the operation $d_j^{(t)}\leftarrow d_j^{(t)}-kd_i^{(t)}$. Since
$$N\left(-kd_i^{(t)}\right)=\left|k\right|_p\cdot N\left(b_i^{(t)}\right)^{-1}\le N\left(b_j^{(t)}\right)^{-1}=N\left(d_j^{(t)}\right),$$
this operation is also of the form 3. By Theorem \ref{th-2.1}, $d_1^{(t+1)},\dots,d_n^{(t+1)}$ is an $M$-orthogonal basis of $\Lambda^*$. On the other hand, since $b_1^{(t)},\dots,b_n^{(t)}$ and $d_1^{(t)},\dots,d_n^{(t)}$ are $M$-orthogonal bases of $\Lambda$ and $\Lambda^*$ respectively, we have
\begin{equation*}
\begin{split}
M\left(b_i^{(t+1)}\right)\cdot M\left(d_i^{(t+1)}\right)&=M\left(b_i^{(t)}+kb_j^{(t)}\right)\cdot M\left(d_i^{(t)}\right)\\
&=M\left(b_j^{(t)}\right)\cdot M\left(d_j^{(t)}\right)\\
&=1.
\end{split}
\end{equation*}
and
\begin{equation*}
\begin{split}
M\left(b_j^{(t+1)}\right)\cdot M\left(d_j^{(t+1)}\right)&=M\left(b_j^{(t)}\right)\cdot M\left(d_j^{(t)}-kd_i^{(t)}\right)\\
&=M\left(b_j^{(t)}\right)\cdot M\left(d_j^{(t)}\right)\\
&=1.
\end{split}
\end{equation*}
Hence the claim holds for $t+1$.\par
Therefore, by induction, $D=D_s$ is an $M$-orthogonal basis of $\Lambda^*$.
\end{proof}

For other norms, here is a simple counterexample:
\begin{example}
Let $K=\mathbb{Q}_2(\pi)$, where $\pi$ is a root of $f(x)=x^2-2$. Let $N$ be the $2$-adic absolute value on $K$. Since $f$ is an Eisenstein polynomial, $K$ is totally ramified of degree $2$ and
$$\left|\pi\right|_2=2^{-\frac{1}{2}}.$$
Let the basis of $K$ be $e_1=1$ and $e_2=\pi$. Let $\Lambda=\mathcal{L}(B)$ where
$$B=(b_1,b_2)=\begin{pmatrix}
1 & 0  \\
1 & 1  \\
\end{pmatrix}.$$
Then we have $b_1=1+\pi$ and $b_2=\pi$. According to the algorithm ``orthogonalization with orthogonal bases'', $b_1,b_2$ is an $N$-orthogonal basis of $\Lambda$.\par
The dual basis of $B$ is
$$D=(d_1,d_2)=\begin{pmatrix}
1 & -1  \\
0 & 1  \\
\end{pmatrix}.$$
Then we have $d_1=1$ and $d_2=\pi-1$. However, since
$$\left|d_1+d_2\right|_2=2^{-\frac{1}{2}}<1=\max{\left\{\left|d_1\right|_2,\left|d_2\right|_2\right\}},$$
$d_1,d_2$ is not an $N$-orthogonal basis of $\Lambda^*$.
\end{example}

\end{document}